\newtheorem{thm}{Theorem}[section]
 \newtheorem{defn}[thm]{Definition}
\title{General contractions in new type perturbed metric spaces}
\author{Bekir Dan{\i}\c{s}}
\address{
{\bf Bekir Dan{\i}\c{s}} (Corresponding Author) \\
Digital Transformation Office\\
Aydın Adnan Menderes University \\
Central Campus, 09010, Aydın, Turkey \\
}
\email{bekir.danis@adu.edu.tr}
\begin{document}

\begin{abstract}

 We focus on the new type perturbed metric spaces and introduce a contraction mapping namely new type perturbed Kannan mappings. For these mappings, we show that Banach's fixed point theorem holds. Moreover, this new generalization of Banach's contraction principle does not depend on the continuity of the operator. 

 {\bf\subjclassname} { 54E50, 47H09, 47H10}\\
 {\bf Keywords}: fixed point, Kannan mappings, new type perturbed metric spaces.
 \end{abstract}

\maketitle

\section{Introduction and Preliminaries}
\label{Sec:1}
Banach's fixed pont theorem(see \cite{ban}) is an important argument in nonlinear analysis and so there are several established generalizations. For these extensions, we have two main techniques: one is construct a new contraction mapping and the other one is introducing new metric spaces. In this present paper, we study more general contraction mappings for the new type perturbed metric spaces introduced in \cite{b}. For further investigations, we refer the reader \cite{js},\cite{k2},\cite{k1},\cite{cha},\cite{rus}
 
We recall the definition of perturbed metric spaces introduced by Jleli and Samet in \cite{js2}
\begin{defn}
Let $X$ be an arbitrary non-empty set. Assume two mappings are given as follows: $D:  X \times X \rightarrow {[0, \infty })$ and $P:  X \times X \rightarrow {[0, \infty })$. We call $D$ is a perturbed metric on $X$ with respesct to $P$ if $${D- P}:  X \times X \rightarrow {[0, \infty }),$$ $${(D - P)}(x,y)={D(x,y) - P(x,y)}$$ is a metric on $X$.

We say $P$ is perturbed  mapping, $d={D - P}$ is the exact metric and the notation $(X,D,P)$ denotes the perturbed metric space.
\end{defn}
 
Furthermore, the article \cite{js2} includes the extension of Banach's contraction principle to perturbed metric spaces. After this generalization, we introduce a new type perturbed metric spaces in \cite{b} as follows:

\begin{defn}
Let $X$ be an arbitrary non-empty set. Assume two mappings are given as follows: $D:  X \times X \rightarrow {[0, \infty })$ and $P:  X \times X \rightarrow {[c, \infty })$ where $c$ is a positive real number. We call $D$ is a new type perturbed metric on $X$ with respesct to $P$ if $${D \over P}:  X \times X \rightarrow {[0, \infty }),$$ $${D \over P}(x,y)={D(x,y) \over P(x,y)}$$ is a metric on $X$.

We say $P$ is a new type perturbed mapping, $d={D\over P}$ is the exact metric and the notation $(X,D,P)$ denotes the new type perturbed metric space.
\end{defn}

Furthermore, we remind reader of the topological definitions in \cite{b}.

\begin{defn}
Let $(X,D,P)$ be a new type perturbed metric space, $T: X \rightarrow X$ and $\{x_n\}$ denotes a sequence in $X$.
\begin{itemize}
\item[{(a)}] We call  $\{x_n\}$ is a new type perturbed convergent sequence in new type perturbed metric space $(X,D,P)$ if the sequence  $\{x_n\}$ is convergent in $(X,d)$.

\item[{(b)}] We call  $\{x_n\}$ is a new type perturbed Cauchy sequence in $(X,D,P)$ if it is a Cauchy sequence with respect to the exact metric.

\item[{(c)}] We say $(X,D,P)$ is complete new type perturbed metric space if $X$ is complete metric space with respect to the exact metric $d$.

\item[{(d)}] We call $T$ is a new type perturbed continuous mapping if it is a continuous map with respect to $d$.
\end{itemize}
\end{defn}

In the article \cite{b}, it is shown that 

\begin{thm}
Assume $(X,D,P)$ is a complete new type perturbed metric space and $T$ represents a given new type perturbed continuous map from $X$ to itself, that is,

$$D(Tx,Ty) \leq \alpha D(x,y) \text{ for } \alpha \in (0,1).$$
 We proved that $T$ admits a unique fixed point.

\end{thm}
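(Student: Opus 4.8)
The plan is to run the Picard iteration and to exploit the uniform lower bound $P \ge c > 0$ in order to transfer the $D$-contraction into summable displacements for the exact metric $d = D/P$, and then to use the assumed new type perturbed continuity of $T$ to identify the limit of the iteration as a fixed point. Concretely, I would fix an arbitrary $x_0 \in X$ and set $x_{n+1} = Tx_n$. Iterating the hypothesis $D(Tx,Ty) \le \alpha D(x,y)$ along this orbit gives $D(x_n, x_{n+1}) \le \alpha^n D(x_0, x_1)$ for every $n$.

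Next I would pass to the exact metric. Since $P(x,y) \ge c$ for all $x,y \in X$,
$$d(x_n, x_{n+1}) = \frac{D(x_n,x_{n+1})}{P(x_n,x_{n+1})} \le \frac{1}{c}\, D(x_n, x_{n+1}) \le \frac{\alpha^n}{c}\, D(x_0, x_1).$$
Because $d$ is a genuine metric, the triangle inequality gives, for $m > n$, $d(x_n, x_m) \le \sum_{k=n}^{m-1} d(x_k, x_{k+1}) \le \dfrac{D(x_0,x_1)}{c}\cdot\dfrac{\alpha^{n}}{1-\alpha}$, which tends to $0$ as $n \to \infty$. Hence $\{x_n\}$ is a Cauchy sequence in $(X,d)$, i.e.\ a new type perturbed Cauchy sequence, and by completeness of $(X,D,P)$ it converges in $(X,d)$ to some $x^* \in X$.

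Now I would invoke the new type perturbed continuity of $T$: from $x_n \to x^*$ in $(X,d)$ we obtain $Tx_n \to Tx^*$, that is, $x_{n+1} \to Tx^*$. Since $\{x_{n+1}\}$ also converges to $x^*$ and limits in the metric space $(X,d)$ are unique, $Tx^* = x^*$. For uniqueness, suppose $Tx^* = x^*$ and $Ty^* = y^*$. Then $D(x^*, y^*) = D(Tx^*, Ty^*) \le \alpha D(x^*, y^*)$, and since $\alpha < 1$ this forces $D(x^*, y^*) = 0$; dividing by $P(x^*,y^*) \ge c > 0$ yields $d(x^*, y^*) = 0$, hence $x^* = y^*$.

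The step I expect to be the crux is reconciling the contraction, which is phrased on $D$, with the topology, which lives on $d$: because $P$ is only bounded below and not above, the $D$-contraction need not descend to a $d$-contraction, so one cannot simply quote Banach's theorem for $(X,d)$. The lower bound $P \ge c$ is exactly what rescues the estimate on the Cauchy side, while the separately assumed $d$-continuity of $T$ is what is needed to conclude that the limit is a fixed point; thus both hypotheses are used essentially.
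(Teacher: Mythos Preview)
Your proof is correct. The paper does not itself supply a proof of this theorem (it is merely quoted from \cite{b}), but your argument follows exactly the scheme the paper employs for its own Theorem~2.2: Picard iteration, the iterated $D$-bound, passage to the exact metric via $P\ge c>0$, the geometric-series Cauchy estimate, completeness, and then the contraction-based uniqueness; the only step specific to the present statement is your use of the assumed $d$-continuity of $T$ to identify the limit as a fixed point, which is precisely why that hypothesis appears here and not in the Kannan version.
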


Notice that this generalization is related to change the metric spaces. Whereas, Kannan extends the fixed point theorem by defining general contractions in \cite{kan} and points out that the operator is not necessarily continuous. 
\begin{thm}
Let $(X,d)$ be a complete metric space and $T: X \to X$ is a mapping satisfying 

$$d(Tx,Ty) \leq \alpha [d(x,Tx)+d(y,Ty)]$$ for all $x,y\in X$ where $0\leq \alpha < {1 \over 2}$. Then, Kannan verified that $T$ has a one and only one fixed point.
\end{thm}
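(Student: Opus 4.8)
The plan is to run the Picard iteration and extract a geometric decay of consecutive distances purely from the Kannan inequality. Fix an arbitrary $x_0 \in X$ and set $x_{n+1} = T x_n$. Applying the hypothesis to the pair $(x_n, x_{n-1})$ gives
$$d(x_{n+1}, x_n) = d(T x_n, T x_{n-1}) \le \alpha\big[d(x_n, T x_n) + d(x_{n-1}, T x_{n-1})\big] = \alpha\big[d(x_{n+1}, x_n) + d(x_n, x_{n-1})\big].$$
Since $\alpha < \tfrac{1}{2}$, the coefficient $1-\alpha$ is positive and strictly larger than $\alpha$, so rearranging yields $d(x_{n+1}, x_n) \le k\, d(x_n, x_{n-1})$ with $k := \frac{\alpha}{1-\alpha} \in [0,1)$. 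Iterating this bound gives $d(x_{n+1}, x_n) \le k^n d(x_1, x_0)$ for all $n$.

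Next I would show $\{x_n\}$ is a Cauchy sequence in $(X,d)$: for $m > n$, the triangle inequality together with the geometric bound gives $d(x_m, x_n) \le \sum_{j=n}^{m-1} k^{j} d(x_1, x_0) \le \frac{k^{n}}{1-k}\, d(x_1, x_0)$, which tends to $0$ as $n \to \infty$. By completeness of $(X,d)$ there exists $x^* \in X$ with $x_n \to x^*$. Then I would verify that $x^*$ is a fixed point — and this is the step where Kannan's argument departs from Banach's, since $T$ is not assumed continuous. Using the triangle inequality and then the hypothesis on the pair $(x_n, x^*)$,
$$d(x^*, T x^*) \le d(x^*, x_{n+1}) + d(T x_n, T x^*) \le d(x^*, x_{n+1}) + \alpha\big[d(x_n, x_{n+1}) + d(x^*, T x^*)\big],$$
so $(1-\alpha)\, d(x^*, T x^*) \le d(x^*, x_{n+1}) + \alpha\, d(x_n, x_{n+1})$. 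Letting $n \to \infty$, the right-hand side vanishes, hence $d(x^*, T x^*) = 0$, i.e. $T x^* = x^*$. For uniqueness, if $y^*$ is another fixed point, then $d(x^*, y^*) = d(T x^*, T y^*) \le \alpha\big[d(x^*, T x^*) + d(y^*, T y^*)\big] = 0$, so $x^* = y^*$.

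I do not expect a deep obstacle here; the one genuine subtlety is that the Kannan condition is not a contraction in the classical sense — it does not bound $d(Tx, Ty)$ by a multiple of $d(x,y)$ — so the geometric estimate has to be recovered indirectly from consecutive iterates via the rearrangement above, and the decay rate turns out to be $\frac{\alpha}{1-\alpha}$ rather than $\alpha$, which is precisely why the threshold $\alpha < \tfrac{1}{2}$ is needed (so that $\frac{\alpha}{1-\alpha} < 1$). The other point to handle carefully is that the identification of the limit as a fixed point must not invoke continuity of $T$; this is why one feeds the pair $(x_n, x^*)$ into the hypothesis rather than attempting to pass to the limit inside $T x_n = x_{n+1}$.
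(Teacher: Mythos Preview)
Your proof is correct. The paper does not actually supply a proof of this statement --- it is quoted as Kannan's classical theorem --- but your argument matches step for step the template the paper uses for its own main result (the perturbed-metric analogue, Theorem~2.2): Picard iteration, the rearrangement giving decay rate $\beta=\alpha/(1-\alpha)\in[0,1)$, the geometric-series Cauchy estimate $d(x_m,x_n)\le \frac{\beta^n}{1-\beta}d(x_1,x_0)$, and the uniqueness computation $d(x^*,y^*)\le \alpha[0+0]=0$. The only minor variation is in the fixed-point verification: you first insert the triangle inequality $d(x^*,Tx^*)\le d(x^*,x_{n+1})+d(Tx_n,Tx^*)$ and then apply the Kannan bound before letting $n\to\infty$, whereas the paper (in its Theorem~2.2) applies the Kannan inequality to $D(Tx_n,Tx^*)$ and passes to the limit directly on both sides. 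Your version is arguably cleaner, since it makes the limit passage fully explicit without any implicit appeal to continuity.
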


After the statement of Kannan's theorem, we investigate Kannan mappings for the new type perturbed metric spaces.
\section{Main Results}
\label{Sec:2}
Throughout this paper,  $(X,D,P)$ is a new type perturbed metric space.
\begin{defn}
 $T: X \to X$ denotes a mapping such that for all $x,y\in X$, 

 \begin{equation} \label{eq1}D(Tx,Ty) \leq \alpha [D(x,Tx)+D(y,Ty)] \end{equation} where $0\leq \alpha < {1 \over 2}$. We say $T$ is a new type perturbed Kannan mapping.
\end{defn}

\begin{thm}
Suppose that $(X,D,P)$ is a complete new type perturbed metric space and $T$ is  a new type perturbed Kannan mapping. Then, there exists a unique fixed point of $T$.
\end{thm}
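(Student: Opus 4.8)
The plan is to iterate $T$, extract a sequence that is Cauchy with respect to the exact metric $d=D/P$, pass to a limit using completeness of $(X,d)$, then identify that limit as a fixed point and settle uniqueness. The only structural input at hand is $P\ge c>0$, equivalently $c\,d(x,y)\le D(x,y)$ for all $x,y\in X$, together with $D(x,x)=d(x,x)P(x,x)=0$. Fix $x_0\in X$ and put $x_{n+1}=Tx_n$. Applying \eqref{eq1} to $(x_{n-1},x_n)$ gives $D(x_n,x_{n+1})\le\alpha[D(x_{n-1},x_n)+D(x_n,x_{n+1})]$, hence $D(x_n,x_{n+1})\le h^n D(x_0,x_1)$ with $h:=\alpha/(1-\alpha)\in[0,1)$. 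Since $d\le D/c$ we get $\sum_n d(x_n,x_{n+1})<\infty$, so $\{x_n\}$ is $d$-Cauchy and converges to some $x^*\in X$.

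For the fixed-point step I would use the triangle inequality for $d$, then $d\le D/c$, then \eqref{eq1} applied to $(x_n,x^*)$:
\begin{align*}
d(x^*,Tx^*)
&\le d(x^*,x_{n+1})+d(x_{n+1},Tx^*)\\
&\le d(x^*,x_{n+1})+\tfrac1c\,D(Tx_n,Tx^*)\\
&\le d(x^*,x_{n+1})+\tfrac{\alpha}{c}\bigl[D(x_n,x_{n+1})+D(x^*,Tx^*)\bigr].
\end{align*}
Letting $n\to\infty$ yields $d(x^*,Tx^*)\le\tfrac{\alpha}{c}D(x^*,Tx^*)$.

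This step is where I expect the real obstacle, and in fact the argument stalls here: writing $D(x^*,Tx^*)=d(x^*,Tx^*)P(x^*,Tx^*)$, the estimate becomes $d(x^*,Tx^*)\le\tfrac{\alpha}{c}P(x^*,Tx^*)\,d(x^*,Tx^*)$, which is vacuous as soon as $\alpha P(x^*,Tx^*)\ge c$, and the definition of a new type perturbed metric space places no upper bound on $P$. In fact the statement as written is not correct: on $X=\{0\}\cup\{3^{-n}:n\ge0\}\subset\mathbb{R}$ with the usual metric $d$, the shift $T(0)=1$, $T(3^{-n})=3^{-n-1}$ has no fixed point, yet choosing a symmetric $P$ with $c=1$ by $P(0,1)=4$, $P(3^{-n},3^{-n-1})=2$ for every $n\ge0$, and $P\equiv1$ on all other pairs, one checks that $D:=dP$ makes $(X,D,P)$ a complete new type perturbed metric space on which $T$ satisfies \eqref{eq1} with $\alpha=\tfrac14$. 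So the theorem needs an additional hypothesis bounding $P$ above: if $M:=\sup_{X\times X}P<\infty$ and $\alpha M<c/2$, then $d(Tx,Ty)\le\tfrac1c D(Tx,Ty)\le\tfrac{\alpha}{c}[d(x,Tx)P(x,Tx)+d(y,Ty)P(y,Ty)]\le\tfrac{\alpha M}{c}[d(x,Tx)+d(y,Ty)]$ and the result follows from Kannan's theorem (\cite{kan}) on the complete metric space $(X,d)$.

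Uniqueness, by contrast, is unconditional: if $Tx=x$ and $Ty=y$ then $D(x,y)=D(Tx,Ty)\le\alpha[D(x,x)+D(y,y)]=0$, so $d(x,y)=D(x,y)/P(x,y)=0$ and $x=y$.
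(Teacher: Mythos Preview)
Your Picard iteration and Cauchy estimate coincide with the paper's argument. The divergence is at the fixed-point step, and you have located a genuine gap in the paper rather than in your own reasoning. From $D(Tx_n,Tx^*)\le\alpha[D(x_n,x_{n+1})+D(x^*,Tx^*)]$ the paper simply ``takes the limit as $n\to\infty$'' and asserts $D(x^*,Tx^*)\le\alpha D(x^*,Tx^*)$. This tacitly replaces the left-hand side by $D(x^*,Tx^*)$, i.e.\ it assumes $D(Tx_n,Tx^*)\to D(x^*,Tx^*)$ once $Tx_n\to x^*$ in the exact metric $d$. Since $D=dP$ and the definition imposes no continuity and no upper bound on $P$, that passage is unjustified --- precisely the obstruction you isolate when your estimate collapses to $d(x^*,Tx^*)\le\tfrac{\alpha}{c}P(x^*,Tx^*)\,d(x^*,Tx^*)$.

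Your counterexample is correct and shows the theorem, as stated, is false. With $X=\{0\}\cup\{3^{-n}:n\ge0\}$ one has $D(0,T0)=D(0,1)=4$ and $D(3^{-n},T3^{-n})=2\cdot\tfrac{2}{3}\cdot3^{-n}=\tfrac{4}{3}\,3^{-n}$; for the pairs $(0,3^{-n})$ and $(3^{-m},3^{-m-1})$ the Kannan inequality with $\alpha=\tfrac14$ holds (with equality at $n=0$ and for every $m$, respectively), and for all non-adjacent pairs it is strict, while $T$ is fixed-point free. Running the paper's limit step in this example (start at $x_0=1$, so $x_n=3^{-n}$, $x^*=0$, $Tx^*=1$) gives $D(Tx_n,Tx^*)=D(3^{-n-1},1)=1-3^{-n-1}\to 1$, whereas $D(x^*,Tx^*)=D(0,1)=4$: the left side simply does not tend to $D(x^*,Tx^*)$. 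Your proposed repair --- a uniform bound $P\le M$ with $\alpha M/c<\tfrac12$, reducing matters to Kannan's theorem on $(X,d)$ --- is sound; an alternative hypothesis that would rescue the paper's own argument verbatim is $d$-continuity of $P$ (equivalently of $D$). Your uniqueness argument matches the paper's and is, as you note, unconditionally valid.
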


\begin{proof}
Take an arbitrary element $x_0 \in X$ and consider Picard sequence defined by $$x_{n+1}=T x_n.$$ In the inequality (\ref{eq1}),by taking $x=x_0, y=x_1$, we have $$D(x_1,x_2)=D(Tx_0,Tx_1)\leq \alpha [D(x_0,Tx_0)+D(x_1,Tx_1)]= \alpha [D(x_0,x_1)+D(x_1,x_2)].$$
This means that 

 \begin{equation}\label{eq2} D(x_1,x_2) \leq {{\alpha\over {1-\alpha}} D(x_0,x_1)}. \end{equation}

In a similar way, we get 

$$ D(x_n,x_{n+1}) \leq {{\alpha\over {1-\alpha}} D(x_{n-1},x_n)}.$$ 
For notational convenience, we call $\beta={\alpha\over {1-\alpha}}$ and $D_0=D(x_0,x_1)$. Observe that $\beta \in [0,1)$. Then, we obtain iteratively 

 \begin{equation}\label{eq2}   D(x_n,x_{n+1}) \leq \beta^n D_0. \end{equation}

Recall that $d={D\over P}$ is the exact metric and the image of $P$ is  ${[c, \infty })$. Since $c >0$ and by (\ref{eq2}), it follows that $$d(x_n,x_{n+1})={D(x_n,x_{n+1}) \over P(x_n,x_{n+1})} \leq {\beta^n \over c} D_0.$$

From this inequality, we infer that $\{x_n\}$ is Cauchy sequence in $(X,d)$ by focusing the following steps

\begin{align*} 
d(x_n,x_{n+k}) &\leq {\beta^n \over c} D_0 + \cdots +  {\beta^{n+k-1} \over c} D_0\\
   &=  {\beta^n \over c} D_0 (1+ \cdots + \beta^{k-1} )\\
   &=  {\beta^n \over c} D_0  {({{1-\beta^k} \over {1-\beta}})}\\
   &\leq  {\beta^n \over {(1-\beta)c}} D_0 .
\end{align*}
Notice that $\beta$ is a non-negative real number less than $1$ by remembering $\alpha\in [0,{1\over2})$

Thus, $\{x_n\}$ is  a new type perturbed Cauchy sequence in $(X,D,P)$. By completeness of $(X,D,P)$, there exists  $x^*$ such that \begin{equation}\label{eq4} \lim_{n \to \infty} d(x_n,x^*)=0. \end{equation}

Firstly, we prove $x^*$ is a fixed point of $T$. Apply the condition (\ref{eq1}) by recalling the definition of Picard sequence and we know 

$$D(Tx_n,Tx^*)\leq \alpha [D(x_n,x_{n+1})+D(x^*,Tx^*)].$$

Now, take the limit as n approaches to $\infty$ and get

$$D(x^*,Tx^*)\leq \alpha D(x^*,Tx^*)$$

 by the inequality (\ref{eq2}). 

This inequality indicates that $D(x^*,Tx^*)$ should be $0$ otherwise we have $1\leq\alpha$ and it causes a contradiction with the assumption of $\alpha< {1\over2}$. This implies $d(x^*,Tx^*)=0$ which means $Tx^*=x^*$ and it completes the proof of existence part.

To show the uniqueness part, suppose that $x,y \in X$ are two different fixed points of $T$ and seek a contradiction. Now, we  use the definition of fixed point and the inequality given in (\ref{eq1})

$$D(x,y)=D(Tx,Ty) \leq \alpha [D(x,Tx)+D(y,Ty)=\alpha [D(x,x)+D(y,y)]=0. $$

By previous step, we obtain $d(x,y)\leq0$ and this demonstrates $x=y$. This means that we could not have two distinct fixed points and it finishes the proof.

\end{proof}

\end{document}